\newcommand{\arxiv}[1]{\href{http://www.arXiv.org/abs/#1}{arXiv:#1}}
\newtheorem{theorem}{Theorem}
\newenvironment{proof}[1][Proof]{\noindent\textbf{#1.} }{\ \rule{0.5em}{0.5em}}
\def\CRIT{{\mathcal G}^c}
\def\crit{{\mathcal G}^c}
\def\spann{\operatorname{span}}
\def\R{{\mathbb R}}
\def\C{{\mathbb C}}
\def\Rp{\R_+}
\def\diag{\operatorname{diag}}
\def\digr{{\mathcal G}}
\def\bH{{\mathbb H}}
\def\Sat{\operatorname{Sat}}
\def\cL{{\mathcal L}}
\begin{document}

\title{Fiedler-Pt\'{a}k scaling in max algebra\thanks{This work is partially supported by
RFBR-CRNF grant 11-01-93106}}

\author{Serge\u{\i} Sergeev\thanks{INRIA and CMAP Ecole Polytechnique,
91128 Palaiseau Cedex, France. Email: sergiej@gmail.com}}
\date{}
\maketitle

\begin{abstract}
This is essentially the text of my talk on Fiedler-Pt\'{a}k scaling in max algebra
delivered in the invited minisymposium in honor of Miroslav Fiedler at the
17th ILAS Conference in Braunschweig, Germany.\\
 
{\em Keywords:} Max algebra, diagonal similarity scaling, nonnegative matrices, commuting matrices, matrix powers\\ 

{\em AMS codes:} 15A80, 15B48, 15A27, 15A21

\end{abstract}


{\bf 0. Introduction}  

The previous talks brought to our attention many ideas of Miroslav Fiedler that served as gold mines in 
mathematics. This talk will be based on only one gold mine: the Fiedler-Pt\'ak diagonal similarity
scaling and the role of this scaling in max algebra. 

We start from the initial appearence of this diagonal similarity scaling in a work of Fiedler-Pt\'ak\cite{FP-67},
which was not meant to be applied in max algebra (an area which was only starting to emerge at the time of~\cite{FP-67}).

We proceed with an introduction into max algebra and a specific version of the Fiedler-Pt\'ak scaling appearing there,
namely the max eigenvector scaling.

Finally we concentrate on applications of the Fiedler-Pt\'ak scaling in max algebra, in particular,
to commuting matrices and matrix powers. In the following we abbreviate this to FP scaling. 

{\bf 1. FP scaling} Let $\digr(A)$ be a finite digraph weighted by $a_{ij}\in\Rp$.
Then, the FP scaling means existence of a potential $y$ with positive coordinates $y_i$ such that
$y_i^{-1}a_{ij}y_j\leq 1$, which occurs if and only if $\prod\limits_{(i,j)\in C} a_{ij}\leq 1$ for any
cycle $C$ in $\digr(A)$. In the {\bf strong FP scaling} the non-strict inequalities are replaced by the strict ones.

Fiedler and Pt\'ak used this scaling to characterize Hadamard preservers of certain classes of real-valued matrices.
For a real-valued matrix $A$, define $H(A)$ by
\begin{equation*}
h_{ii}=|a_{ii}|,\quad h_{ij}=-|a_{ij}|\ \text{for $i\neq j$}.
\end{equation*}
Then class $\bH$ contains all matrices $A$ such that $H(A)$ has all principal minors positive,
and class $\bH_*$, in addition, requires that the matrices have all non-zero diagonal
entries. Here is one of the equivalence theorems (rather, a part of it) characterizing
the Hadamard preservers of $\bH$ and $\bH_*$.

\begin{theorem}[Fiedler, Pt\'{a}k\cite{FP-67}, Theorem 4.1]
For $B=(b_{ij})\in\R^{n\times n}$ (where $n\geq 2$), the following are equivalent:
\begin{itemize}
\item[1.] $\forall k_1,\ldots,k_s$, $|\prod_{i=1}^s b_{k_ik_{i+1}}|\leq |\prod_{i=1}^s b_{k_ik_i}|\neq 0$;
\item[2.] $\exists$ diagonal positive $D$ s.t. $C=D^{-1}BD$ satisfying
$0\neq |c_{ii}|\geq |c_{ij}|$ $\forall i,j\in\{1,\ldots,n\}$;
\item[3.] $\forall A\in\bH$, $A\circ B\in\bH$;
\item[4.] $\forall A\in\bH_*$, $A\circ B\in \bH_*$.
\end{itemize}
\end{theorem}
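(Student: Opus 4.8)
The plan is to prove $1\Leftrightarrow 2$ directly from the FP scaling of Section~1, and then to close the loop with $2\Rightarrow 3$, $2\Rightarrow 4$, $3\Rightarrow 1$ and $4\Rightarrow 1$. For $1\Leftrightarrow 2$, note first that taking the one-element sequence ($s=1$) in item~1 forces $b_{ii}\neq 0$ for every $i$, so I may weight the digraph $\digr(B)$ by $w_{ij}=|b_{ij}|/|b_{ii}|$. Then item~1 says exactly that every cycle of $\digr(B)$ has weight product $\prod w_{k_ik_{i+1}}=\bigl(\prod|b_{k_ik_{i+1}}|\bigr)/\bigl(\prod|b_{k_ik_i}|\bigr)\le 1$, and the FP scaling yields a positive potential $y$ with $|b_{ij}|/|b_{ii}|\le y_i/y_j$. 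Putting $D=\diag(y_i)$ gives $|c_{ij}|=(y_j/y_i)|b_{ij}|\le |b_{ii}|=|c_{ii}|$, which is item~2. Conversely item~2 returns item~1 at once, since $|\prod b_{k_ik_{i+1}}|$ and $|\prod b_{k_ik_i}|$ are invariant under diagonal similarity and the dominance $|c_{ii}|\ge|c_{ij}|$ makes the cyclic inequality hold term by term.

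The pivot for the remaining implications is the identity $H(A\circ B)=H(A)\circ|B|$, where $|B|=(|b_{ij}|)$: Hadamard multiplication by $B$ acts on $H(A)$ precisely as entrywise multiplication by the nonnegative $|B|$, which preserves the sign pattern (nonnegative diagonal, nonpositive off-diagonal) of $H(A)$. For $2\Rightarrow 3,4$ I would use that item~2 gives $|B|=D|C|D^{-1}$ entrywise, whence $H(A)\circ|B|=D\,(H(A)\circ|C|)\,D^{-1}$ is a diagonal similarity and so has the same principal minors as $H(A)\circ|C|$. Writing $|c_{ij}|=|c_{ii}|\,\theta_{ij}$ with $\theta_{ii}=1$ and $0\le\theta_{ij}\le 1$, the matrix $H(A)\circ|C|$ is a positive row-rescaling of $H(A)\circ\Theta$, so positivity of principal minors is unaffected; and $H(A)\circ\Theta$ is obtained from $H(A)$ by shrinking off-diagonal entries toward zero, i.e. $H(A)\le H(A)\circ\Theta$ entrywise with both of $Z$-type. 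By the standard monotonicity of nonsingular $M$-matrices under such entrywise domination, every principal minor of $H(A)\circ\Theta$ stays positive, so $A\circ B\in\bH$; the extra diagonal requirement of $\bH_*$ follows from $(A\circ B)_{ii}=a_{ii}b_{ii}\neq 0$.

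The heart of the matter, and the step I expect to be the main obstacle, is the converse $3\Rightarrow 1$ (and likewise $4\Rightarrow 1$), which I would argue by contraposition: assuming item~1 fails, construct a single $A\in\bH$ (indeed in $\bH_*$) with $A\circ B\notin\bH$. If some $b_{kk}=0$, then $A=I$ already works, since $A\circ B=\diag(B)$ has a vanishing $1\times1$ principal minor. Otherwise all $b_{ii}\neq 0$ and, reducing a violating closed walk to a simple cycle, after relabelling $1\to 2\to\cdots\to m\to 1$ (indices mod $m$) we have $\prod_{i=1}^m|b_{i,i+1}|>\prod_{i=1}^m|b_{ii}|$. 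I take $A$ to be the identity off these $m$ indices and, on them, a cyclic matrix whose $H(A)$-block $M$ has diagonal $\alpha_i>0$ and off-diagonal entries $-\beta_i$ (with $\beta_i>0$) only at the positions $(i,i+1)$. For this block $\det M=\prod_i\alpha_i-\prod_i\beta_i$, while every proper principal minor, the cycle being broken, reduces to a product of $\alpha_i$'s and is positive; hence $A\in\bH$ precisely when $\prod\alpha_i>\prod\beta_i$.

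It then remains to break the class under Hadamard multiplication: the corresponding block of $H(A\circ B)$ has determinant $\prod_i\alpha_i|b_{ii}|-\prod_i\beta_i|b_{i,i+1}|$, and I choose the ratio $r=\prod\beta_i/\prod\alpha_i$ in the interval $\bigl[\prod|b_{ii}|/\prod|b_{i,i+1}|,\,1\bigr)$, which is nonempty exactly by the cycle violation. This keeps $A\in\bH$ yet forces the block determinant of $H(A\circ B)$ to be $\le 0$, so $A\circ B\notin\bH$. The delicate points are to keep the whole matrix $A$, not merely the cycle block, inside the class, which is secured by the block-diagonal structure of $H(A)$ against the identity, and to reconcile the degenerate cases against the exact definitions of $\bH$ versus $\bH_*$; choosing $r$ strictly above the ratio yields a strictly negative minor and settles both classes simultaneously, completing $3\Rightarrow 1$ and $4\Rightarrow 1$ and thereby all four equivalences.
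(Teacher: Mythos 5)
Your proposal is correct, but there is no line-by-line comparison to make: the paper does not prove this theorem at all. It quotes it from Fiedler--Pt\'ak \cite{FP-67} and adds only the single remark that ``FP scaling is used to establish equivalence between the first two statements.'' Your proof of $1\Leftrightarrow 2$ is precisely that indicated route, carried out in full: item~1 forces $b_{ii}\neq 0$, the weights $|b_{ij}|/|b_{ii}|$ have all cycle products at most $1$, the FP potential $y$ supplies $D$, and the converse uses invariance of cycle products under diagonal similarity together with termwise dominance. Everything else --- the identity $H(A\circ B)=H(A)\circ|B|$, the reduction of $2\Rightarrow 3,4$ to the classical fact that a $Z$-matrix entrywise dominating a nonsingular $M$-matrix is again a nonsingular $M$-matrix, and the contrapositive $3,4\Rightarrow 1$ built on the cyclic block with $\det M=\prod_i\alpha_i-\prod_i\beta_i$, proper principal minors reducing to products of the $\alpha_i$, and the ratio $r=\prod_i\beta_i/\prod_i\alpha_i$ chosen strictly between $\prod_i|b_{ii}|/\prod_i|b_{i,i+1}|$ and $1$ --- is material the paper delegates entirely to the reference, so there your argument stands on its own; I checked these steps and they are sound, including the reduction of a violating closed walk to a violating simple cycle, the degenerate case $b_{kk}=0$ handled by $A=I$, and the observation that since $\bH_*\subseteq\bH$ under the paper's definitions, one matrix $A\in\bH_*$ with $A\circ B\notin\bH$ refutes items~3 and~4 simultaneously. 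The one external ingredient is the $M$-matrix domination fact, which you invoke as standard without proof; that is acceptable (it is classical, and indeed due to Fiedler and Pt\'ak themselves), but it is worth stating explicitly as the lemma your $2\Rightarrow 3,4$ step rests on.
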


FP scaling is used to establish equivalence between the first two statements.

{\bf 2. Max-algebra and FP scaling} Consider nonnegative numbers $\Rp$ equipped
with the ordinary multiplication $ab$ and the new addition $a\oplus b:=\max(a,b)$.
These operations are extended to matrices and vectors (of compatible sizes) 
in the usual way: we have $(A\oplus B)_{ij} = (a_{ij}\oplus b_{ij})$,
$(A\otimes B)_{ij} = \bigoplus_{k} a_{ik}\otimes b_{kj}$ and
$A^k =\overbrace{A\otimes\cdots\otimes A}^{k}$ for $A=(a_{ij})$ and $B=(b_{ij})$.
\if{
\begin{equation*}
\begin{split}
(A\oplus B)_{ij} &:= (a_{ij}\oplus b_{ij}) \\
(A\otimes B)_{ij} &:= \bigoplus_{k} a_{ik}\otimes b_{kj}\\
A^k &:=\overbrace{A\otimes\cdots\otimes A}^{k}.
\end{split}
\end{equation*}
}\fi

There are also max-plus and min-plus versions of max algebra. The first one
is $\R\cup\{-\infty\}$ equipped with $a\otimes b:=a+b$ and $a\oplus b:=\max(a,b)$, the second one is
$\R\cup\{+\infty\}$ equipped with $a\otimes b:=a+b$ and $a\oplus b:=\min(a,b)$,
Both are isomorphic to our principal {\bf max-times} version.
 
Note that the max-algebraic matrix powers can be expressed in terms of {\bf optimal paths of fixed length}.
For a path $P=i_1\to\cdots\to i_k$, denote
the weight and the length of $P$ by
\begin{equation*}
w(P):=a_{i_1i_2}\cdot\ldots\cdot a_{i_{k-1}i_k},\ l(P)=k-1.
\end{equation*}
Then the $i,j$ entry of $A^t$ can be expressed as the greatest weight
over all paths with length $t$, connecting $i$ to $j$:
\begin{equation*}
a_{ij}^{(t)}=\max\{w(P)\colon i\to\overset{P}{\to}\to j,\ l(P)=t\}. 
\end{equation*}
Dropping the restriction on length leads to a max-algebraic analogue of
$(I-A)^{-1}$ called {\bf Kleene star}:
\begin{equation*}
A^*:=I\oplus A\oplus A^2\oplus\cdots
\end{equation*}
Thus $a_{ij}^*$ is the {\bf greatest weight
of all paths} connecting $i$ to $j$ (for $i\neq j$). These obsevations also give a hint of
applications of max-linearity in optimization and scheduling.

The Kleene star converges if and only if there are no cycles $C$ with $w(C)>1$.
In this case, $A^*=I\oplus A\oplus\cdots\oplus A^{n-1}$.

In the following we denote by $\spann_{\oplus}(A)$ the max-algebraic column span of 
$A$, i.e., the set of all max-algebraic combinations of the columns. 
In fact, all FP scalings can be described as positive vectors $x$ in $\spann_{\oplus}(A^*)$,
and all strong FP scalings as vectors $x$ in the interior of $\spann_{\oplus}(A^*)$.
For a FP scaling, take a max-linear combination of all columns of
$A^*$ with all positive coefficients, and for a strong FP scaling (possible only if $\spann_{\oplus}(A^*)$ is full-dimensional), 
take an ordinary linear combination. Actually, $\spann_{\oplus}(A^*)$ is both a max-linear space (max cone) and an ordinary convex cone, and
the interplay between these properties has been considered in~\cite{SSB}, and Joswig-Kulas~\cite{JK-10}. See also
Butkovi\v{c}~\cite{But-03} where the strong FP scaling is used in the context of strong regularity of matrices in max algebra.

Armed with this description of all FP scalings, Butkovi\v{c} and Schneider~\cite{BS-05} gave
full solutions to several matrix scaling problems. One of these problems is to find
all diagonal scalings $X$ such that $B=X^{-1}AX$ has equal column and row maxima:
$b_{ii}=\max_j b_{ij}=\max_j b_{ji}$ for all $i$.
The solution to this problem is given by all positive vectors of $\spann_{\oplus}(Q^*)$, where
$Q:=AD^{-1}\oplus D^{-1}A$ and $D=\diag(a_{11},\ldots,a_{nn})$.

Another problem is, for $A_i,B_i,C_i\in\Rp^{n\times n}$ such that $\digr(A_i)\subseteq\digr(B_i)\subseteq\digr(C_i)$, where $i=1,\ldots,m$,
to describe all $X$ such that $A_i\leq X^{-1}B_iX\leq C_i$. Informally, we have to get $B_i$ sandwiched
between $A_i$ and $C_i$ simultaneously by means of one diagonal similarity transform.
To solve the problem, take positive vectors of $\spann_{\oplus}(Q^*)$, where
$Q:=\bigoplus_{i=1}^m B_i/C_i\oplus\bigoplus_{i=1}^m A_i^T/B_i^T$.
Here, the divisions $B_i/C_i$ and $A_i^T/B_i^T$ are defined entrywise.

Max eigenvector scaling is also a kind of FP scaling. Max eigenvectors are solutions
to the spectral problem $A\otimes x=\lambda x$. The greatest eigenvalue can be found as 
the maximum cycle geometric mean
\begin{equation*}
\lambda(A)=\max_{\text{cycles } C\in\digr(A)}\left(\prod_{(i,j)\in C} a_{ij}\right)^{1/l(C)}.
\end{equation*}
Further, the {\bf critical cycles} are defined as the cycles where 
$\lambda(A)$ is attained, and the {\bf critical graph} consists of all
nodes and edges on the critical cycles. When $\lambda(A)=1$ (which can be assumed
w.l.o.g.), the space of eigenvectors associated
with $\lambda(A)$ can be explicitly described as the space (max cone) spanned
by the columns of $A^*$ whose indices belong to the critical graph of $A$.

Max eigenvector (with eigenvalue $1$) corresponds to a scaling satisfying the following
properties: 1) for all $i,j$ we have $x_i^{-1}a_{ij}x_j\leq 1$ , 2) for all $i$ there is $j$ such that
$x_i^{-1}a_{ij}x_j=1$.  (Also note that $(i,j)\in\crit(A)$ implies $x_i^{-1}a_{ij}x_j=1$ for any FP scaling.)

For any FP scaling $x$ we can define the {\bf saturation graph} $\Sat(A,x)$ consisting of the edges $(i,j)$ where
$x_i^{-1}a_{ij}x_j=1$. We obtain that 
$\crit(A)\subseteq\Sat(A,x)$ and that the cycles of any saturation graph are precisely the
critical cycles. Property 2) of the eigenvector scaling means that in the
saturation graph of an eigenvector, each node has an outgoing edge.  

Below we give a schematic view of critical graph, and saturation graph of an eigenvector scaling.

\begin{figure}
\begin{tikzpicture}[scale=0.5]
\begin{scope}[shorten >=1pt,->]

\foreach \j in {0}
{
\foreach \i in {0,4,10,14}
{\draw (\i,\j) to (\i+1,\j+1);}
\foreach \i in {2,6,12,16}
{\draw (\i,\j) to (\i-1,\j-1);}
\foreach \i in {1,5,11,15}
{\draw (\i,\j+1) to (\i+1,\j);
\draw (\i,\j-1) to (\i-1,\j);}
}

\draw(12,0) to (14,0);
\draw(17,0) to (16,0);

\draw (-1.5,0) node{$\crit$:};
\draw (8.5,0) node{$\Sat$:};

\end{scope}
\end{tikzpicture}
\caption{Critical graph and saturation graph}
\end{figure}
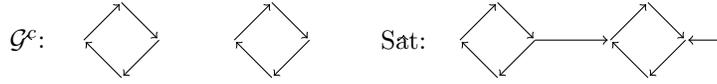

{\bf 3. Commuting matrices.} When two matrices $A$ and $B$ commute, the eigenspace of one matrix $A$ (corresponding
to some eigenvalue) is invariant
under the action of the other matrix $B$. Using this it can be proved that when $A$ and $B$ are irreducible, the
intersection of eigenspaces always contains a positive common vector~\cite{KSS}.
Take this vector $x$ and use it for
a simultaneous scaling of $A$ and $B$. Further idea is to define $A^{[1]}$ and $B^{[1]}$ as associated 
Boolean matrices of the saturation graphs $\Sat(A,x)$ and $\Sat(B,x)$, and to 
observe that matrices $A^{[1]}$ and $B^{[1]}$ also commute. This leads us to study 
{\bf commuting digraphs with nonzero outdegree of each node}. As an application,
we may obtain properties of critical graphs of commuting max-algebraic matrices (since
the strongly connected components of a critical graph are the same as 
the strongly connected components of any saturation graph).

\if{
Further if the eigenspace of $A$ is finitely generated, which means
existence of a matrix $Q$ such that it is the column span of $Q$, there exists matrix $C$ such that
$BQ=QC$. Then, choosing any eigenvector $z$ of $C$, we obtain vector $Qz$ which is in the eigenspace of $A$ and
at the same time in some eigenspace of $B$. This argument is very general, it works both in max algebra and in the 
ordinary linear algebra equally well (however, it must be assured that $Qz$ is nonzero). 

Let us restrict to the case when $A$ and $B$ are irreducible, in this case both of them have only one eigenvalue,
hence the corresponding eigenspaces have non-empty intersection. 
}\fi

Commuting digraphs can be thought of as a ``game'' where two players (Alice $A$ and Bob $B$) act together to connect one node to the other.
If one node can be reached from another using some number of actions of $A$ and $B$, then these actions can be arranged in
every possible pattern, but the connecting routes (i.e., intermediate nodes and edges) will be different in general. 
Playing with such commuting digraphs we obtained the following

\begin{theorem}[Katz, Schneider, Sergeev \cite{KSS}]
\label{commuting}
Let $\digr_1$ and $\digr_2$ commute and have nonzero outdegree of every node.
Let $N_{\mu}^1$, $\mu=1,\ldots,m_1,$ (resp.
$N_{\nu}^2$, $\nu=1,\ldots,m_2,$)
be the node sets of nontrivial s.c.c. of $\digr_1$ (resp. of $\digr_2$).
Then there exists a cycle $c_1\in\digr_1$ with all nodes in $\bigcup_{\nu} N_{\nu}^2$ (resp.
a cycle $c_2\in\digr_2$ with all nodes in $\bigcup_{\mu} N_{\mu}^1$).
\end{theorem}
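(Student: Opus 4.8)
The plan is to encode the two digraphs by their Boolean adjacency matrices $A$ and $B$ (so $A_{ij}=1$ iff $i\to j$ in $\digr_1$, and likewise for $B$), so that the commuting hypothesis reads $A\otimes B=B\otimes A$, and hence $A\otimes B^{p}=B^{p}\otimes A$ for every $p\ge 1$. Writing $U:=\bigcup_{\nu}N_{\nu}^{2}$ for the set of nodes lying on a cycle of $\digr_2$ (these are exactly the nodes of the nontrivial s.c.c.\ of $\digr_2$), the first assertion is equivalent to saying that the subgraph of $\digr_1$ induced on $U$ contains a cycle. Since a finite digraph in which every node has out-degree at least one must contain a cycle, it suffices to prove the following closure property, which I will call the \emph{Key Lemma}: if $i\in U$, then $i$ has an outgoing $\digr_1$-edge $i\to j$ with $j\in U$. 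Granting this, every node of $U$ has an outgoing $\digr_1$-edge staying in $U$, so $\digr_1$ restricted to $U$ has all out-degrees positive and therefore contains a cycle $c_1$ with all nodes in $U$; exchanging the roles of $\digr_1$ and $\digr_2$ then yields $c_2$.

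To prove the Key Lemma I would exploit commutativity exactly once, in order to transport a $B$-cycle at $i$ onto the set of $\digr_1$-successors of $i$. Fix $p\ge 1$ with $(B^{p})_{ii}=1$ (possible since $i$ lies on a $B$-cycle) and let $\mathcal{O}=\{j:A_{ij}=1\}$ be the nonempty set of $\digr_1$-successors of $i$. For each $j\in\mathcal{O}$ we have $(B^{p}\otimes A)_{ij}=1$ via the walk $i\to_{B^{p}}i\to_{A}j$; commuting gives $(A\otimes B^{p})_{ij}=1$, that is, there is $k\in\mathcal{O}$ with $(B^{p})_{kj}=1$. Thus, regarding $B^{p}$ as a relation on the finite set $\mathcal{O}$, every element $j\in\mathcal{O}$ has a predecessor $k\in\mathcal{O}$. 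A finite digraph with all in-degrees positive contains a cycle, so there exist $j_1,\dots,j_r\in\mathcal{O}$ with $(B^{p})_{j_a j_{a+1}}=1$ cyclically; concatenating these $B$-walks shows $(B^{pr})_{j_1 j_1}=1$, so $j_1$ lies on a $B$-cycle, i.e.\ $j_1\in U$. Since $j_1$ is a $\digr_1$-successor of $i$, the lemma follows.

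The conceptual crux, and the step I expect to be the main obstacle to discover, is precisely this commutativity swap $B^{p}\otimes A=A\otimes B^{p}$ applied to the successor set: it converts the single fact ``$i$ returns to itself in $p$ steps of $B$'' into the statement that the $\digr_1$-successors of $i$ form a set on which $B^{p}$ acts with every node having an in-neighbour, which is exactly what forces a $B$-cycle among those successors. Everything else is routine: the two ``positive in/out-degree $\Rightarrow$ cycle'' observations, the identification of $U$ with the union of the nontrivial s.c.c.\ node sets, and the final symmetry argument swapping $\digr_1$ with $\digr_2$.
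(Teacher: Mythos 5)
Your proof is correct and is essentially the paper's own argument made rigorous: your Key Lemma, proved via the swap $B^{p}\otimes A=A\otimes B^{p}$, is exactly the paper's pictorial step (``ask $B$ to turn around, then let $A$ go out'' equals ``let $A$ go out, then ask $B$ to connect''), which forces every node of the destination set to have a $B$-predecessor inside that set and hence a $B$-cycle through it. The only cosmetic difference is the wrap-up: the paper phrases it as moving from one s.c.c.\ of $\digr_2$ to another until a cycle of $\digr_1$ must close up, whereas you observe directly that $\digr_1$ restricted to $U=\bigcup_{\nu}N_{\nu}^{2}$ has all out-degrees positive and therefore contains a cycle.
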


\begin{proof} {\bf (in pictures)}
Here is, essentially, what we need to prove: that there is a cycle of $A$ (red) living in the s.c.c. ``houses'' of $B$ (blue
spheres). See Figure~\ref{f:result}.

\begin{figure}
\centering
\begin{tikzpicture}[scale=0.5]
\begin{scope}[blue]
\draw (0,0) circle (1.41 cm); 
\draw (2,2) circle (1.41 cm);
\draw (4,0) circle (1.41 cm); 
\draw (2,-2) circle (1.41 cm);
\end{scope}
\begin{scope}[shorten >=1pt,->,red]
\draw (0,0) to (0.67,0.67);
\draw (0.67,0.67) to (1.33,1.33);
\draw (1.33,1.33) to (2,2);
\draw (2,2) to (2.67,1.33);
\draw (2.67,1.33) to (3.33,0.67);
\draw (3.33,0.67) to (4,0);
\draw (0.67,-0.67) to (0,0) ;
\draw (1.33,-1.33) to (0.67,-0.67) ;
\draw (2,-2) to (1.33,-1.33);
\draw (2.67,-1.33) to (2,-2) ;
\draw (3.33,-0.67) to (2.67,-1.33);
\draw (4,0) to (3.33,-0.67);
\end{scope}
\end{tikzpicture}
\caption{The statement of Theorem~\ref{commuting}}
\label{f:result}
\end{figure}
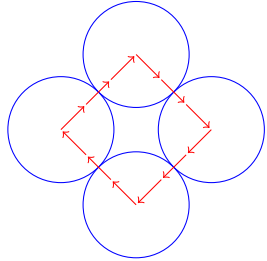

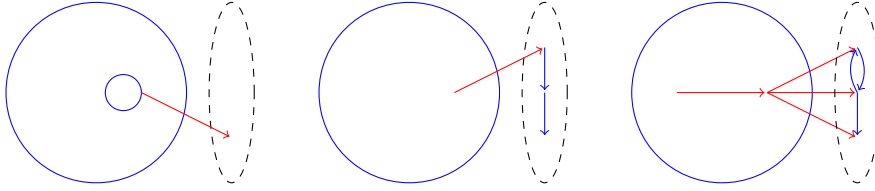
\begin{figure}
\centering
\begin{tabular}{ccccc}
\begin{tikzpicture}[scale=0.6]
\begin{scope}[blue]
\draw (0,0) circle (2 cm); 
\draw (0.6,0) circle (0.4 cm); 
\end{scope}
\begin{scope}[dashed]
\draw (3,0) ellipse (0.5cm and 2 cm); 
\end{scope}
\begin{scope}[shorten >=1pt,->,red]
\draw (1,0) to (3,-1);
\end{scope}
\end{tikzpicture}
&&
\begin{tikzpicture}[scale=0.6]
\begin{scope}[blue]
\draw (0,0) circle (2 cm); 
\end{scope}
\begin{scope}[dashed]
\draw (3,0) ellipse (0.5cm and 2 cm); 
\end{scope}
\begin{scope}[shorten >=1pt,->,red]
\draw (1,0) to (3,1);
\end{scope}
\begin{scope}[shorten >=1pt,->,blue]
\draw (3,1) to (3,0);
\draw (3,0) to (3,-1);
\end{scope}
\end{tikzpicture}
&&
\begin{tikzpicture}[scale=0.6]
\begin{scope}[blue]
\draw (0,0) circle (2 cm); 
\end{scope}
\begin{scope}[shorten >=1pt,->,red]
\draw (-1,0) to (1,0);
\draw (1,0) to (3,0);
\draw (1,0) to (3,1);
\draw (1,0) to (3,-1);
\end{scope}

\begin{scope}[dashed]
\draw (3,0) ellipse (0.5cm and 2 cm); 
\end{scope}

\begin{scope}[shorten >=1pt,->,blue]
\draw (3,1) to[bend left] (3,0);
\draw (3,0) to[bend left] (3,1);
\draw (3,0) to (3,-1);
\end{scope}
\end{tikzpicture}
\end{tabular}
\caption{The proof diagram}
\label{f:proofdiag}
\end{figure}

To show this, consider a subgraph of $A$ living in a s.c.c. of $B$. If it has cycles, then
we are done. Otherwise, we consider a node where it has to go out, and we need to show that the {\bf destination set}, i.e., 
the set of nodes to which $A$ can move after it goes out, intersects with another s.c.c. of $B$.

Now look at Figure~\ref{f:proofdiag} from left to the right (the destination set is the dashed ellipse).
We ask $B$ to turn around, since in the s.c.c. of $B$ each node lies on a cycle, 
and then allow $A$ to go out (left). But due to commutativity, we achieve the same effect if
we first allow $A$ to go out, and then ask $B$ to connect to the destination node (middle). This implies that
each node of the destination set has a predecessor in the graph of $B$, and hence there are cycles of $B$ going through
the destination set (right). Cycles of $B$ belong to the strongly connected components of $B$, which finishes the proof.
\end{proof}

{\bf 4. Matrix powers.} A theoretical motivation to study max-algebraic matrix
powers is that they are similar to powers of graphs well-known in combinatorics~\cite{BR}.
In particular, the following fact has been known for a very long time, due to
Cohen, Dubois, Quadrat et Viot~\cite{CDQV-83}.
\begin{theorem}[Cyclicity Theorem]
\label{CDQV}
Let $A\in\Rp^{n\times n}$ be irreducible 
nd $\lambda(A)=1$, then the sequence 
$A,A^2,A^3\ldots$ is ultimately periodic. That is, after some coupling time
$T(A)$ there is a period $\gamma$ such that $A^{r+\gamma}=A^r$
for all $r\geq T(A)$.
\end{theorem}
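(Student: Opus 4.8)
The plan is to reduce the periodicity of $A^t$ to the combinatorial structure of the critical graph $\crit(A)$ and its cyclicities. First I would normalize using the max eigenvector scaling of Section 2: since $A$ is irreducible and $\lambda(A)=1$, pick a positive eigenvector $x$ with $A\otimes x=x$ and pass to $B:=X^{-1}AX$ with $X=\diag(x)$. Then $b_{ij}=x_i^{-1}a_{ij}x_j\leq 1$ for all $i,j$, with equality exactly on the edges of the saturation graph $\Sat(A,x)$, which contains $\crit(A)$. Because $B^t=X^{-1}A^tX$, the sequence $A^t$ is ultimately periodic with period $\gamma$ if and only if $B^t$ is, so it suffices to treat the ``visualized'' matrix $B$ whose entries all lie in $[0,1]$ and whose critical edges carry weight $1$.

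Next I would write $b_{ij}^{(t)}$ as the greatest weight over paths of length $t$ and argue that, for large $t$, such optimal paths must spend almost all their time on critical cycles. The key is a gap estimate: every cycle not contained in $\crit(A)$ has weight strictly less than $1$, and since there are finitely many simple cycles there is a uniform bound $\rho<1$; hence a long path that wanders off the critical graph for too many steps accumulates a weight dominated by a fixed power of $\rho$, which eventually falls below the weight obtainable by routing through $\crit(A)$. Consequently, for $t\geq T(A)$, an optimal path decomposes into an entry segment $i\to p$ into the critical graph, a long critical segment $p\to q$ of weight $1$, and an exit segment $q\to j$; here $p,q$ range over critical nodes, the entry and exit weights are the fixed finite Kleene-star weights $b^*_{ip}$ and $b^*_{qj}$, and the only quantity depending on $t$ is whether a critical walk from $p$ to $q$ of the complementary length exists.

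The periodicity then comes entirely from the reachability pattern inside $\crit(A)$. Each nontrivial strongly connected component $C_\ell$ of $\crit(A)$ has a cyclicity $\sigma_\ell$, the gcd of its cycle lengths; by the classical theory of the index of imprimitivity of a strongly connected digraph, for any two nodes $p,q\in C_\ell$ there is a threshold beyond which a walk of length $s$ from $p$ to $q$ exists if and only if $s$ lies in one fixed residue class modulo $\sigma_\ell$. Setting $\gamma:=\operatorname{lcm}_\ell \sigma_\ell$, the set of admissible connecting lengths becomes periodic with period $\gamma$ once $t$ is large, so the decomposition above gives $b_{ij}^{(t+\gamma)}=b_{ij}^{(t)}$ for all $t\geq T(A)$, which is the assertion.

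The main obstacle is the gap argument pinning the optimal paths onto the critical graph together with explicit control of the transient $T(A)$: one must quantify how long a near-optimal path may traverse non-critical edges before the factor $\rho<1$ renders such detours suboptimal, and simultaneously ensure that every critical component has already entered its periodic regime. Turning the qualitative ``eventually'' into a single coupling time valid for all entries $(i,j)$ at once is the delicate bookkeeping; by contrast the period $\gamma=\operatorname{lcm}_\ell \sigma_\ell$ itself is the comparatively easy output of strongly-connected-graph theory once the reduction to $\crit(A)$ is in place.
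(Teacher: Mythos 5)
Your overall architecture is the right one, and it is in fact the machinery the paper itself gestures at: the paper states Theorem~\ref{CDQV} without proof (deferring to the cited monographs~\cite{BCOQ,But:10,HOW:05}) and instead presents its mechanism as the CSR form $A^t=CS^tR$ of Theorem~\ref{schneider}. Your steps of FP/eigenvector scaling, the gap argument with $\rho<1$ forcing long optimal walks onto $\crit(A)$, the eventual periodicity of walk lengths inside each strongly connected critical component, and the period $\gamma=\operatorname{lcm}_{\ell}\sigma_{\ell}$ are all sound and standard.

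However, your central decomposition step fails as written, and it is exactly the point where the CSR construction differs from what you wrote. You assert that for $t\geq T(A)$ an optimal path realizes the plain Kleene-star weights $b^*_{ip}$ and $b^*_{qj}$ on its entry and exit segments, with $t$ entering only through existence of a critical $p\to q$ walk of ``the complementary length''. Since $b^*_{ip}$ is a maximum over walks of all lengths, there is no well-defined complementary length, and the lengths of weight-optimal entry/exit walks need not be compatible modulo $\gamma$ with the total length $t$. Concretely, take $n=3$, $b_{12}=b_{21}=1$, $b_{23}=b_{31}=\tfrac12$, all other entries $0$: this matrix is irreducible and visualized, $\lambda=1$, $\crit$ is the $2$-cycle on $\{1,2\}$, and $\gamma=2$. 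For odd $t\geq 3$ one has $b^{(t)}_{11}=\tfrac14$ (one traversal of the $3$-cycle $1\to 2\to 3\to 1$, padded with critical $2$-cycles). Your formula with $(p,q)=(1,2)$ gives $b^*_{11}\cdot b^*_{21}=1$ while critical $1\to 2$ walks of every odd length exist, so it predicts $1$; if instead one insists on the actual lengths of the walks attaining $b^*_{11}$ and $b^*_{21}$, then every pair $(p,q)$ becomes length-incompatible and the formula predicts $0$. The true value $\tfrac14$ requires an exit walk ($2\to 3\to 1$) that is weight-suboptimal but lies in the correct residue class mod $\gamma$, so entry/exit weights and the critical connectivity pattern cannot be optimized independently. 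The repair is precisely what the paper's Theorem~\ref{schneider} and Figure~\ref{f:CR} encode: take the entry and exit weights not from $A^*$ but from $(A^{\gamma})^*$, i.e.\ maximize only over walks whose lengths are multiples of $\gamma$, so that all residue bookkeeping is carried by the critical factor $S^t$. With that replacement (or, equivalently, maximizing entry/exit weights separately within each residue class mod $\gamma$), your argument closes.
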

See also monographs on max algebra, such as~\cite{BCOQ,But:10,HOW:05}, for proof and
more explanation, as well as~\cite{But:10,Gav:04,HA-99,Mol-03,Nacht,BdS,SS-11} for further development.

Let us define the critical matrix $A^{[C]}$ as the associated matrix of the critical 
digraph. Then it satisfies $(A^k)^{[C]}=(A^{[C]})^k$.

Assume that $\lambda(A)=1$ and that $A$ is visualized (i.e., after an application
of FP scaling). 
Let $C$, resp. $R$
be the matrices extracted from the critical columns, resp.
rows, of $(A^{\gamma})^*$,
where $\gamma$ is the cyclicity of $\CRIT(A)$, see Figure~\ref{f:CR}. Let
$S:=A^{[C]}$ {\bf (critical matrix)}.

\begin{figure}
\centering
\begin{tikzpicture}[scale=1]
\draw (0,0) node{$(A^{\gamma})^*$}; \draw (-2.25,2.25)
node{$\crit(A)$}; \draw (-3,3) -- (3,3); \draw (-3,-3) -- (-3,3);
\draw (-3,-3) -- (3,-3); \draw (3,-3) -- (3,3);
\begin{scope}[dashed]
\draw (-2.25,2.25) circle (0.75cm);
\end{scope}
\begin{scope}[red]
\draw (3,3) -- (-3,3); \draw (3,2.5) -- (-3,2.5); \draw (3,2) --
(-3,2); \draw (3,1.5) -- (-3,1.5); \draw (-3.3,2.25) node{$R$};
\end{scope}
\begin{scope}[brown]
\draw (-3,3) -- (-3,-3); \draw (-2.5,3) -- (-2.5,-3); \draw (-2,3)
-- (-2,-3); \draw (-1.5,3) -- (-1.5,-3); \draw (-2.25,3.3)
node{$C$};
\end{scope}
\end{tikzpicture}
\caption{Extracting $C$ and $R$ terms}
\label{f:CR}
\end{figure}
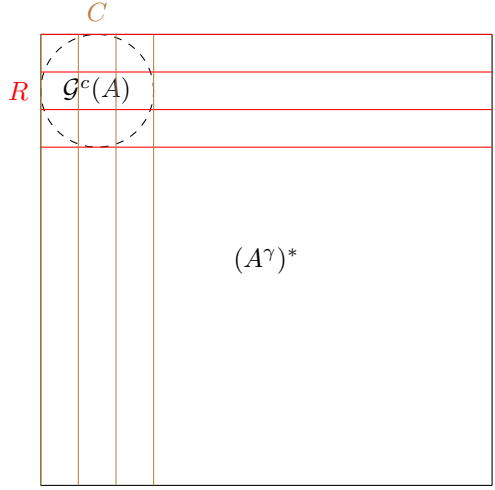

Then the following form of cyclicity theorem can be seen:
\begin{theorem}[Schneider, 2009]
\label{schneider}
Let $A\in\Rp^{n\times n}$ be irreducible and visualized,
$\lambda(A)=1$, then there exists $T(A)$
such that $A^t=C S^t R$ for all $t\geq T(A)$.
\end{theorem}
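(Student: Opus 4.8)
The plan is to establish the entrywise identity $a^{(t)}_{ij}=(CS^tR)_{ij}$ for all $t\geq T(A)$ by reading both sides as optimal path weights, and then showing that once $t$ is large the maximal-weight path of length $t$ from $i$ to $j$ is forced to spend its long middle stretch inside $\CRIT(A)$, where it accumulates weight $1$. \emph{Preliminaries I would record first.} Since $A$ is visualized with $\lambda(A)=1$, every entry satisfies $a_{ij}\leq 1$, every critical edge has weight $1$, and every critical cycle has weight $1$; hence $S=A^{[C]}$ is just the restriction of $A$ to $\CRIT(A)$, so that $S^t_{pq}=1$ exactly when there is a path of length $t$ lying in $\CRIT(A)$ from $p$ to $q$, and $S^t_{pq}=0$ otherwise. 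Because $\gamma$ is the cyclicity of $\CRIT(A)$ (the least common multiple of the cyclicities of its strongly connected components), $S$ is block-diagonal over those components and $S^{t+\gamma}=S^t$ for all sufficiently large $t$. Finally $(A^\gamma)^*$ converges, since there is no cycle of weight $>1$, so $C$ and $R$ are well defined, with $C_{ip}=(A^\gamma)^*_{ip}$ the greatest weight of a path $i\to p$ whose length is a multiple of $\gamma$, and symmetrically $R_{qj}=(A^\gamma)^*_{qj}$; irreducibility guarantees that each $i$ reaches and each $j$ is reached from $\CRIT(A)$, so these quantities are positive for suitable critical $p,q$.

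\emph{Lower bound.} Expanding gives $(CS^tR)_{ij}=\bigoplus_{p,q\in\CRIT(A)}(A^\gamma)^*_{ip}\,S^t_{pq}\,(A^\gamma)^*_{qj}$, and I would prove $a^{(t)}_{ij}\geq(CS^tR)_{ij}$ directly. A nonzero term corresponds to a path $i\to p$ of length $a\gamma$, a critical path $p\to q$ of length $t$ and weight $1$, and a path $q\to j$ of length $b\gamma$; here $a,b\leq n-1$ because the entries of $(A^\gamma)^*$ are attained by paths of length at most $(n-1)\gamma$. Concatenating yields a genuine path of length $t+(a+b)\gamma$ and weight $(A^\gamma)^*_{ip}(A^\gamma)^*_{qj}$. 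Using $S^t_{pq}=S^{t-(a+b)\gamma}_{pq}=1$, valid once $t-2n\gamma$ exceeds the periodicity threshold of $S$, I replace the middle critical path by one of length $t-(a+b)\gamma$ and weight $1$, making the total length exactly $t$ with unchanged weight. Taking the maximum over $p,q$ gives the lower bound for all large $t$.

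\emph{Upper bound.} This is the crux. Fix a maximal-weight path $W$ of length $t$ from $i$ to $j$. The essential quantitative input is the \emph{spectral gap}: every non-critical cycle has geometric mean at most some fixed $\rho<1$. Decomposing $W$ into a bounded backbone plus cycles, any stretch that repeatedly traverses non-critical cycles, or transits between two distinct critical components, loses at least a fixed geometric factor per unit length; hence once $t$ exceeds a threshold determined by $\rho$, $n$, and the smallest positive entry of $A$, such a path cannot be optimal. Consequently $W$ reaches $\CRIT(A)$ within a bounded number of initial steps, leaves it only within a bounded number of final steps, and between its first and last critical visits may be assumed to run inside a single strongly connected component. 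Writing $p,q$ for the first and last critical nodes visited, the weight factors as $w_1\cdot 1\cdot w_3$ with $w_1\leq a^*_{ip}$ and $w_3\leq a^*_{qj}$. To match this with a term of the right-hand side, I would reroute by inserting full turns of weight $1$ around the critical cycles through $p$ and $q$, shifting the three segment lengths into residue classes modulo $\gamma$ compatible both with the constraint that the three lengths sum to $t$ and with the periodicity $S^{t+\gamma}=S^t$, so that the entrance and exit lengths become multiples of $\gamma$; their weights are then bounded by $(A^\gamma)^*_{ip}$ and $(A^\gamma)^*_{qj}$ respectively, while the middle contributes $S^t_{pq}=1$. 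This yields $a^{(t)}_{ij}\leq(CS^tR)_{ij}$.

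Combining the two bounds gives $A^t=CS^tR$ for every $t\geq T(A)$, with $T(A)$ the larger of the thresholds above. I expect the upper bound to be the main obstacle, and within it the delicate point is the residue bookkeeping modulo $\gamma$: one must guarantee simultaneously that the optimal path genuinely commits to one critical component for its long middle stretch (the single-component reduction, driven by the spectral gap) and that the residual entrance and exit lengths can be pushed to multiples of $\gamma$ while the total length stays fixed at $t$. It is precisely here that the full cyclicity $\gamma$ of $\CRIT(A)$, rather than the individual component cyclicities, enters, and that $T(A)$ must be chosen large enough to secure both the gap estimate and the periodicity $S^{t+\gamma}=S^t$.
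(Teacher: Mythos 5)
First, a point of reference: the paper itself states this theorem without proof (it is quoted as a known result, with the actual proof living in the CSR-expansion work cited as [SS-11]), so your attempt has to be judged against the standard argument. Your preliminaries, your lower bound $a^{(t)}_{ij}\geq (CS^tR)_{ij}$ (concatenation plus ultimate periodicity of the Boolean matrix $S$), and your use of the spectral gap to force optimal paths to visit $\crit(A)$ are all sound. The upper bound, however, has a genuine gap, in fact two. The ``single-component reduction'' is false, and the spectral gap cannot deliver it: visualization only guarantees $a_{ij}\leq 1$, so a \emph{non-critical} edge may still have weight $1$. Take two critical components (loops of weight $1$ at nodes $1$ and $2$) joined by an edge $a_{12}=1$; this matrix is visualized with $\lambda(A)=1$, and every optimal path $1\to 2$ of length $t$ must cross between the two components strictly between its first and last critical visits. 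A transit between components costs a bounded factor (here, factor $1$), not a geometric factor per unit length, so nothing confines the middle stretch to one component. With your choice of $p$ = first and $q$ = last critical node, these can lie in different components, $S^t_{pq}=0$, and your upper bound collapses.

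The second gap is the residue bookkeeping itself: it cannot be done by ``inserting full turns of weight $1$ around the critical cycles through $p$ and $q$.'' Closed critical walks based at a fixed node $p$ of a component $N$ have lengths which are (eventually exactly all) multiples of the cyclicity $\gamma_N$ of $N$, so inserting them moves the entrance length $t_1$ only within its residue class modulo $\gamma_N$; it can be pushed to a multiple of $\gamma$ only when $\gamma_N$ divides $t_1$, which is not guaranteed. The standard repair fixes both gaps at once and is simpler than your scheme: split the optimal path $W=W_1W_2$ at a \emph{single} critical node $r$ (one visit suffices), bound $w(W)\leq w(W_1)\,w(W_2)$, then append to $W_1$ a critical path from $r$ to a possibly \emph{different} node $p$ of the same component of length exactly $\delta_1\equiv -l(W_1)\pmod{\gamma}$ (possible because every critical node has an outgoing critical edge, so one can walk any prescribed number of steps inside the component), and symmetrically prepend to $W_2$ a critical path from some $q$ to $r$ of length $\delta_2\equiv -l(W_2)\pmod{\gamma}$. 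This gives $w(W_1)\leq C_{ip}$ and $w(W_2)\leq R_{qj}$, and the cyclic-class arithmetic closes the loop: since $\delta_1+\delta_2\equiv -t\pmod{\gamma_N}$, walks $p\to q$ inside $N$ have length $\equiv t\pmod{\gamma_N}$, whence $S^t_{pq}=1$ once $t$ exceeds the transient of $S$. It is the freedom to slide the endpoints along the critical component, rather than to spin in place at fixed nodes, that makes the residues come out right; without it the upper bound does not go through.
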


In particular we obtain that
$A^{t+\gamma}= A^t$ for all $t\geq T(A)$, 
$S^t R$ yields critical rows of $A^t$ and
$C S^t$ yields critical columns of $A^t$ for $t\geq T(A)$.

The matrix powers can be expressed in terms of optimal paths, and
the cyclicity theorem also has a path sense. Namely, one defines
{\em strong paths} as paths traversing a node of the critical graph.

\begin{theorem}[Sergeev, Schneider~\cite{SS-11}]
Let $\Pi_{ij,t}^{str}$ be the set of strong paths connecting $i$ to $j$
of length $t$ and let $w(\Pi_{ij,t}^{str})$ be the greatest weight of a path in
this set. Then $w(\Pi_{ij,t}^{str})=(CS^tR)_{ij}$  after $t\geq 3n^2$. 
\end{theorem}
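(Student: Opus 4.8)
The plan is to prove the two inequalities $w(\Pi_{ij,t}^{str})\ge (CS^tR)_{ij}$ and $w(\Pi_{ij,t}^{str})\le (CS^tR)_{ij}$ separately, after first reading off what $(CS^tR)_{ij}$ means combinatorially. Since $A$ is visualized with $\lambda(A)=1$, every edge weight is $\le 1$ and every critical edge has weight $1$, so $S=A^{[C]}$ is the $0/1$ adjacency matrix of $\crit(A)$ and $(S^t)_{kl}\in\{0,1\}$ equals $1$ exactly when there is a critical walk from $k$ to $l$ of length precisely $t$. As critical edges live inside the strongly connected components of $\crit(A)$, the matrix $S$ is block diagonal over these components, so $(S^t)_{kl}=1$ forces $k,l$ into one component $M$. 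Writing $C_{ik}=((A^\gamma)^*)_{ik}$ and $R_{lj}=((A^\gamma)^*)_{lj}$ for critical $k,l$, the quantity $(CS^tR)_{ij}=\max_{k,l} C_{ik}(S^t)_{kl}R_{lj}$ is the largest weight of a concatenation: an optimal entry path $i\to k$ of length $\equiv 0\pmod\gamma$, a weight-$1$ critical walk $k\to l$ of length $t$ inside one component, and an optimal exit path $l\to j$ of length $\equiv 0\pmod\gamma$. Every such concatenation is a strong path, but its length is $t$ plus a multiple of $\gamma$.

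For the lower bound I would take optimizers $k,l$ (necessarily in one component $M$) and realize the three factors explicitly. Because $\lambda(A^\gamma)=1$ the star $(A^\gamma)^*$ converges and equals $\bigoplus_{m=0}^{n-1}(A^\gamma)^m$, so $C_{ik}$ and $R_{lj}$ are attained by paths of length at most $(n-1)\gamma$ each. The excess length $p\gamma+q\gamma\le 2(n-1)\gamma$ is then reabsorbed by replacing the length-$t$ critical walk $k\to l$ with one of length $t-(p+q)\gamma$: this is legitimate because cyclicity of $M$ makes the existence of a critical walk $k\to l$ of a given length depend only on that length modulo $\gamma$ once the length exceeds the stabilization (Wielandt-type) index of $M$, and $t-(p+q)\gamma\equiv t\pmod\gamma$ stays above that index for $t\ge 3n^2$. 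Concatenation then gives a genuine strong path of length exactly $t$ and weight $C_{ik}R_{lj}=(CS^tR)_{ij}$.

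For the upper bound I would take an arbitrary strong path $P$ of length $t$, fix a critical node $v$ on it lying in a component $M$ of cyclicity $\gamma_M\mid\gamma$, and split $P=P'P''$ at $v$, with $a$ the number of edges of $P'$; then $w(P)\le (A^a)_{iv}(A^{t-a})_{vj}$. The key step is to align the two halves to lengths divisible by $\gamma$ without losing weight: append to the optimal $i\to v$ path a weight-$1$ critical walk $v\to k$ inside $M$ of length $\equiv -a\pmod\gamma$, and prepend to the optimal $v\to j$ path a critical walk $l\to v$ of length $\equiv -(t-a)\pmod\gamma$. Such walks exist because, $M$ being strongly connected with cyclicity $\gamma_M\mid\gamma$, walks out of $v$ realize every residue class modulo $\gamma$ for all sufficiently large lengths. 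This yields $C_{ik}\ge (A^a)_{iv}$ and $R_{lj}\ge (A^{t-a})_{vj}$, while the cyclic classes of the constructed $k,l$ satisfy $\mathrm{class}(l)-\mathrm{class}(k)\equiv t\pmod{\gamma_M}$, which is exactly the condition guaranteeing $(S^t)_{kl}=1$ once $t$ exceeds the stabilization index of $M$. Hence $w(P)\le C_{ik}(S^t)_{kl}R_{lj}\le (CS^tR)_{ij}$.

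I expect the main obstacle to be the residue and cyclic-class bookkeeping inside the critical components: proving that weight-$1$ critical walks of every prescribed length class modulo $\gamma$ exist once the length passes a Wielandt-type threshold, that the classes produced by the two alignment steps combine to the single congruence $t\pmod{\gamma_M}$ needed for $(S^t)_{kl}=1$, and that all these thresholds, together with the transient bound $2(n-1)\gamma$ from the lower bound, can be collected into the single clean estimate $t\ge 3n^2$. The remainder is the routine max-algebra fact that prefix and suffix weights of a path are bounded by the corresponding entries of $A^a$ and $A^{t-a}$, together with convergence of $(A^\gamma)^*$.
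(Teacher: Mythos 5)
The paper itself contains no proof of this theorem---it is a survey statement quoted from \cite{SS-11}---so your proposal can only be judged on its own merits and against what a correct argument requires. Your upper-bound half (splitting a strong walk at a critical node $v$ in a component $M$, aligning both halves with weight-one critical walks, and verifying $(S^t)_{kl}=1$ through cyclic classes) is essentially sound, once the phrase ``walks out of $v$ realize every residue class modulo $\gamma$'' is made precise by letting the endpoint $k$ range over the cyclic classes of $M$. The genuine gap is in the lower bound. There you bound the lengths of the optimal entry and exit walks by $(n-1)\gamma$ each and claim the excess $(p+q)\gamma\le 2(n-1)\gamma$ can be reabsorbed into the critical walk because $t-(p+q)\gamma$ stays above a Wielandt-type threshold when $t\ge 3n^2$. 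This tacitly assumes $\gamma=O(n)$. But $\gamma$ is the cyclicity of $\crit(A)$, i.e.\ the least common multiple of the cyclicities of its strongly connected components, and it can greatly exceed $n$: if the critical graph of a $12\times 12$ matrix consists of disjoint cycles of lengths $3$, $4$ and $5$, then $\gamma=60$ and $2(n-1)\gamma=1320>432=3n^2$; by Landau's function, $\gamma$ can even be superpolynomial in $n$. In that regime the replacement critical walk of length $t-(p+q)\gamma$ need not exist (that number can even be negative), so your construction only yields a strong walk of length $t$ plus a nonzero multiple of $\gamma$, not of length exactly $t$, and the inequality $w(\Pi_{ij,t}^{str})\ge (CS^tR)_{ij}$ is not established precisely in the case of several critical components.

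The missing idea is that all residue bookkeeping must be done modulo the local cyclicity $\gamma_M$ of the single component $M$ containing the optimizers $k,l$, never modulo the global $\gamma$. Two observations make this work: (i) since one can append to any walk $i\to k$ a closed critical walk at $k$ of suitable length divisible by $\gamma_M$ so as to make the total length divisible by $\gamma$, the entry $C_{ik}$ equals the optimal weight over walks $i\to k$ whose length is merely $\equiv 0\pmod{\gamma_M}$; (ii) among such walks an optimal one of length $O(n^2)$ exists, because if a cycle decomposition of an optimal walk contains at least $\gamma_M$ cycles, a pigeonhole argument on partial sums of cycle lengths produces a nonempty subset of total length $\equiv 0\pmod{\gamma_M}$, and deleting it cannot decrease the weight since all entries of the visualized $A$ are at most $1$. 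With entry and exit walks of length $O(n^2)$ and congruent to $0$ modulo $\gamma_M$, the absorption into the critical walk goes through modulo $\gamma_M$, and the threshold $3n^2$ (or $2n^2$, via the Hartman--Arguelles techniques the paper alludes to) becomes a matter of careful constants. Note that your upper bound already works modulo $\gamma_M$ through the cyclic classes of $M$, which is precisely why it survives; the lower bound needs the same localization.
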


We remark that the bound $3n^2$ can be reduced to $2n^2$ using the techniques
of Hartman-Arguelles~\cite{HA-99} Theorem 4. 

In the scheduling applications it is important to bound $T(A)$ from above. 
As a vague idea of doing this, let us start with a conjecture that after
$t\geq O(n^2)$, we rather have a CSR expansion of the form
\begin{equation}
\label{CSR-exp}
A^t=\bigoplus_k \lambda_k^t C_k S_k^t R_k,
\end{equation}
since at early times the matrix powers are not yet dominated by the 
critical graph and the main CSR term.

An example of CSR expansion is shown below. This expansion is called the
{\bf Nachtigall expansion}, see \cite{Mol-03,Nacht,SS-11}.

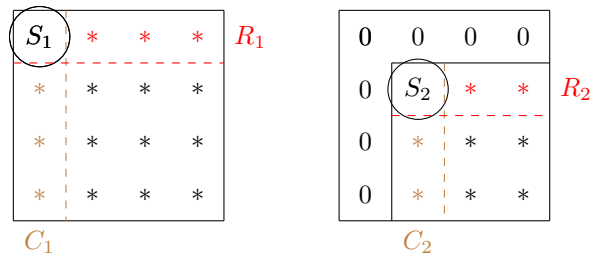
\begin{figure}
\centering
\begin{tabular}{cccc}
\begin{tikzpicture}[scale=0.7]
\draw (-2.5,2.5) node[circle,draw]{$S_1$}; 
\begin{scope}[red]
\foreach \i in
{-1.5,-0.5, 0.5} {
\draw (\i ,2.5) node{$*$};
}
\end{scope}

\begin{scope}[brown]
\foreach \i in {1.5,0.5,-0.5}
{
\draw (-2.5,\i) node{$*$};
}
\end{scope}

\foreach \i in {0.5,-0.5,-1.5}
{
\foreach \j in {1.5,0.5,-0.5}
{
\draw (\i,\j) node{$*$};
}
}

\draw (-2.5,2.5) node[circle,draw]{$S_1$};

\draw (-3,3) -- (1,3);
\draw (-3,-1) -- (-3,3); \draw (-3,-1) -- (1,-1); \draw (1,-1) --
(1,3);

\begin{scope}[dashed,brown]
\draw (-2,3) -- (-2,-1);
\draw (-2.5,-1.4) node{$C_1$};
\end{scope}

\begin{scope}[dashed,red]
\draw (-3,2) -- (1,2);
\draw (1.5,2.5) node{$R_1$};
\end{scope}

\end{tikzpicture}

&&

\begin{tikzpicture}[scale=0.7]
\draw (-2.5,2.5) node{$0$}; \foreach \i in {-2.5,-1.5,-0.5, 0.5} {
\draw (\i ,2.5) node{$0$}; }

\foreach \i in {1.5,0.5, -0.5} { \draw (-2.5,\i)
node{$0$};
}

\begin{scope}[red]
\foreach \i in {-0.5,0.5}
{
\draw (\i ,1.5) node{$*$};
}
\end{scope}

\begin{scope}[brown]
\foreach \i in {0.5,-0.5} { \draw (-1.5,\i) node{$*$};

}
\end{scope}

\foreach \i in {0.5,-0.5}
{
\foreach \j in {0.5,-0.5}
{ \draw (\i,\j) node{$*$};}}

\draw (-1.5,1.5) node[circle,draw]{$S_2$};

\draw (-3,3) -- (1,3);
\draw (-3,-1) -- (-3,3); \draw (-3,-1) -- (1,-1); \draw (1,-1) --
(1,3);
\draw (-2,-1) -- (-2,2);
\draw (-2,2) -- (1,2);

\begin{scope}[dashed,brown]
\draw (-1,2) -- (-1,-1);
\draw (-1.5,-1.4) node{$C_2$};
\end{scope}

\begin{scope}[dashed,red]
\draw (-2,1) -- (1,1);
\draw (1.5,1.5) node{$R_2$};
\end{scope}

\end{tikzpicture}
\end{tabular}
\caption{Formation of first two terms in a 
Nachtigall expansion: a schematic example. }
\label{f:nacht}
\end{figure}

Namely, after formation of the main term we cancel all rows and columns with indices in the critical graph,
and the second term is formed exactly in the same way. This procedure goes on until we get an acyclic matrix
at some stage, and leads to a CSR expansion~\eqref{CSR-exp}. A bound on $T(A)$
can be obtained using~\eqref{CSR-exp} and following the arguments of Hartman-Arguelles~\cite{HA-99} Theorem 10:
\begin{equation}
T(A)\leq 2n^2 \frac{\max\log a_{ij} - \min\log a_{ij}}{\log\lambda_1-\log\lambda_2}.
\end{equation}

The Nachtigall expansion may seem too straightforward. A different scheme can be
suggested using another kind of FP scaling called {\bf max-balancing}.

\begin{theorem}[H.~Schneider, M.~H.~Schneider~\cite{SS-91}]
For any irreducible $A\in\Rp^{n\times n}$ there is $x\in\Rp^n$ s.t.
$B=X^{-1}AX$ has the following equivalent properties:
\begin{itemize}
\item {\bf (max-balancing)} For any cut $M\subseteq\{1,\ldots,n\}$, $\overline{M}:=\{1,\ldots,n\}\backslash M$,
the greatest weight of the edges from $\overline{M}$ to $M$ equals the greatest weight of the edges
from $M$ to $\overline{M}$.
\item {\bf (cycle cover)} For each edge $(i,j)$ there exists a cycle $c$ containing this edge,
where the weight of $(i,j)$ is minimal.
\end{itemize}
\end{theorem}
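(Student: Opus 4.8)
The plan is to prove that the two stated properties are equivalent, and then to establish existence of a scaling satisfying one of them (the cycle cover condition). Throughout write $X=\diag(x_1,\dots,x_n)$ and $b_{ij}=x_i^{-1}a_{ij}x_j$, and recall that diagonal similarity leaves every cycle product unchanged, so the digraph $\digr(A)=\digr(B)$ and its cycle structure are fixed and only the individual edge weights move. The central device for the equivalence is, for a threshold $w>0$, the set $S$ of nodes reachable from a given node along edges of weight $\ge w$.

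First I would show cycle cover $\Rightarrow$ max-balancing. Fix a cut $M$ and let $(i,j)$ with $i\in M$, $j\in\overline{M}$ be a heaviest edge crossing from $M$ to $\overline{M}$, of weight $w$. By cycle cover there is a cycle through $(i,j)$ on which $(i,j)$ is lightest, so all its other edges have weight $\ge w$; this cycle leaves $M$ at $(i,j)$ and must re-enter $M$, hence contains an edge from $\overline{M}$ to $M$ of weight $\ge w$. Thus the heaviest $\overline{M}\to M$ edge is at least as heavy as the heaviest $M\to\overline{M}$ edge, and swapping the roles of $M$ and $\overline{M}$ gives equality. For the converse, assume $B$ is max-balanced, take any edge $(i,j)$ of weight $w$, let $S$ be the set reachable from $j$ along edges of weight $\ge w$, and set $M=S$. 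Every edge leaving $M$ has weight $<w$ by construction; if $i\notin S$ then $(i,j)$ is an edge entering $M$ of weight $w$, so the heaviest edge entering $M$ is $\ge w$ while the heaviest leaving is $<w$, contradicting max-balancing. Hence $i\in S$, i.e. there is a path $j\to i$ of edges of weight $\ge w$, which together with $(i,j)$ is the cycle required by cycle cover.

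For existence I would minimise, over all positive scalings $x$, the sequence of edge weights $(b_{ij})_{(i,j)\in\digr(A)}$ sorted in nonincreasing order, in the lexicographic sense. In logarithmic coordinates $u_i=\log x_i$ modulo the global shift $u\mapsto u+c\bunity$, each order statistic is a continuous, piecewise-affine function of $u$, and irreducibility (strong connectivity) makes the largest weight coercive: along any direction not parallel to $\bunity$ some edge weight tends to $+\infty$, since the node set where $u$ is maximal is entered by an edge from outside. Hence the lexicographic minimum is attained. I then claim the minimiser satisfies cycle cover. If not, the reachability argument above produces an edge $(i,j)$ of weight $w$ with $i\notin S$; taking $M=S$ and multiplying $x_k$ by a factor $t$ slightly below $1$ for all $k\in M$ leaves within-block weights fixed, scales the light ($<w$) edges $M\to\overline{M}$ up by $1/t$ but still below $w$ for $t$ close to $1$, and scales the edges $\overline{M}\to M$, including the weight-$w$ edge $(i,j)$, down by $t$. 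Thus no edge of new weight $\ge w$ has increased while $(i,j)$ strictly decreases, which lowers the sorted sequence lexicographically and contradicts minimality. By the equivalence just proved, this scaling is also max-balanced.

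The easy part is the equivalence; the hard part will be the existence step, specifically the bookkeeping that the perturbation strictly improves the lexicographic objective. One must observe that the only edges which grow are the $M\to\overline{M}$ edges, all of which stay below $w$, so every edge whose new weight is $\ge w$ had old weight at least its new weight; consequently the top part of the sorted sequence can only decrease, and it does decrease strictly at the coordinate carrying $(i,j)$. One must also confirm the coercivity that guarantees the minimum is attained. An alternative, more constructive route would build the scaling hierarchically, fixing first the maximum-cycle-mean level by an FP/eigenvector scaling, whose heaviest edges already form cycles and hence satisfy cycle cover, and then recursing on the strongly connected components; but the laminar bookkeeping there is comparably delicate.
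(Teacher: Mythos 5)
Your proof is correct, but note that there is nothing in the paper to compare it against: the theorem is quoted in this survey as a known result of Schneider and Schneider \cite{SS-91} and no proof is given there at all. Judged on its own merits, your proposal holds up. The equivalence via the threshold set $S$ (nodes reachable from $j$ along edges of weight $\geq w$) is sound in both directions, with irreducibility (strong connectivity) guaranteeing that every nonempty proper cut is crossed both ways. The existence step also closes, and the bookkeeping you flagged as the delicate point does work: after the perturbation, any edge whose new weight is $\geq w$ is an unchanged or a decreased edge, so for every threshold $s\geq w$ the number of edges of weight $\geq s$ cannot increase; since $(i,j)$ itself drops strictly below $w$, the count at threshold $w$ strictly decreases, so the sorted vector strictly decreases at the last position of its top block, contradicting lexicographic minimality. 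The thinnest point is attainment of the lexicographic minimum: strictly one should say that the largest weight is convex and coercive on $\R^n/\R\bunity$, hence has compact argmin, and then minimize the successive (continuous, not convex) order statistics over nested compact argmin sets; this is routine, and you flag it yourself. Finally, your closing remark is apt: the construction used in the original reference \cite{SS-91}, and the one most in the spirit of this paper (compare its discussion of max-balancing alongside the Hartman--Arguelles threshold digraphs and the Nachtigall expansion), is essentially the hierarchical route you sketch as an alternative --- fix the maximum-cycle-mean level by an eigenvector/FP scaling, contract the critical components, and recurse --- whereas your variational argument trades that laminar bookkeeping for a compactness argument and is shorter and self-contained.
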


An idea due to Hartman-Arguelles~\cite{HA-99} is that in a max-balanced matrix,
we can consider a threshold digraph consisting of all edges with weights not preceeding
some parameter. Lowering this parameter until the threshold digraph has a new strongly connected 
component determines the ``region of influence'' of the main CSR term, and in the same vein, of all
the rest to be formed. This idea will be made precise in a forthcoming work of the author.

{\bf 5. Other works.}

The author's works on application of FP scaling were inspired by 
the study of power method in max algebra due to Elsner and van den Driessche~\cite{ED-01}.
We also mention that the same authors use FP scaling in their study of symmetrically reciprocal
matrices~\cite{ED-04}.

The idea of FP scaling was further extended to semigroups by
Gaubert~\cite{Gau-96} who gave a positive solution to Burnside problem for the semigroups
of matrices in max algebra. In a more recent work, Benek Gursoy and Mason~\cite{GM-11b} extend the strong 
FP scaling to the set of matrices, applying it to study the asymptotic stability of max-algebraic difference inclusions.

The same idea was crucial for the development of max-algebraic methods in
the (ordinary) eigenvalue perturbation theory, aimed to improve
and extend the theory of Vi\v{s}ik, Ljusternik and Lidski\u{\i}. Namely, Akian, Bapat and Gaubert~\cite{ABG-04} consider matrices of the form
\begin{equation*}
(A_{\epsilon})_{ij}=a_{ij}\epsilon^{A_{ij}}+o(\epsilon^{A_{ij}}),\quad \epsilon\to^+ 0
\end{equation*}
where $a\in\C^{n\times n}$, $A\in(\R\cup\{+\infty\})^{n\times n}$.
One of the goals is to give the first order asymptotics of the eigenvalues
\begin{equation}
\label{asympt}
\cL_{\epsilon}^i\sim\lambda_i\epsilon^{\Lambda_i}.
\end{equation}
It can be observed that the ordinary FP scaling (of the min-plus matrix $A$, in terms of min-plus algebra) 
already helps, visualizing some entries of $a$ (corresponding to the critical graph of $A$) which are really important for
the asymptotics.
Further idea is to extend the critical graph to cover the whole $A$, which is achieved by means of a diagonal
similarity scaling similar to max-balancing, to leave out all unimportant entries of $a$ and to construct
a sequence of Schur complements in what remains of $a$. This sequence is determined by a critical graph tower associated with $A$
(in min-plus algebra). This critical graph tower also determines the exponents $\Lambda_i$ in~\eqref{asympt}, and $\lambda_i$
appear as groups of eigenvalues (in the ordinary linear algebra), determined by the sequence of Schur complements. See~\cite{ABG-04}~Theorem 5.1 for precise formulation.   

We also mention that the diagonal similarity scaling of FP type is useful in the idempotent analysis (extension to max-algebraic 
functional spaces), see Kolokoltsov and Maslov~\cite{KM:97}~Chapter~2.
 
{\bf 6. Acknowledgement.} 

I wish to thank my coauthors Peter Butkovi\v{c} and Hans Schneider for their help and support.


\end{document}